\newcounter{itemcounter}
\numberwithin{itemcounter}{section}
\newtheorem{thm}[itemcounter]{Theorem}
\newtheorem{lem}[itemcounter]{Lemma}
\newtheorem{prop}[itemcounter]{Proposition}
\newtheorem{exam}[itemcounter]{Example}
\newtheorem*{thm*}{Theorem}
\newtheorem*{con*}{Conjecture}
\newtheorem*{cor*}{Corollary}
\newtheorem*{ack*}{Acknowledgements}
\newcommand{\Inf}{\mathop{\rm Inf}\nolimits}
\newcommand{\Irr}{\mathop{\rm Irr}\nolimits}
\newcommand{\Hom}{\mathop{\rm Hom}\nolimits}
\newcommand{\Aut}{\mathop{\rm Aut}\nolimits}
\newcommand{\Out}{\mathop{\rm Out}\nolimits}
\newcommand{\Inn}{\mathop{\rm Inn}\nolimits}
\newcommand{\Pic}{\mathop{\rm Pic}\nolimits}
\newcommand{\rk}{\mathop{\rm rk}\nolimits}
\newcommand{\nth}{\mathop{\rm th}\nolimits}
\newcommand{\Id}{\mathop{\rm Id}\nolimits}
\newcommand{\Picent}{\mathop{\rm Picent}\nolimits}
\newcommand{\cC} {\mathcal{C}}
\newcommand{\cT} {\mathcal{T}}
\newcommand{\cO} {\mathcal{O}}
\newcommand{\cF} {\mathcal{F}}
\title{On Picent for blocks with normal defect group\footnote{This research was supported by the EPSRC (grant no EP/T004606/1).} }
\author{Michael Livesey\footnote{School of Mathematics, University of Manchester, Manchester, M13 9PL, United Kingdom. Email: michael.livesey@manchester.ac.uk} and Claudio Marchi\footnote{School of Mathematics, University of Manchester, Manchester, M13 9PL, United Kingdom. Email: claudio.marchi@manchester.ac.uk}}
\date{}
\begin{document}

\maketitle

\begin{abstract}
We prove that if $b$ is a block of a finite group with normal abelian defect group and inertial quotient a direct product of elementary abelian groups, then $\Picent(b)$ is trivial. We also provide examples of blocks $b$ of finite groups with non-trivial $\Picent(b)$. We even have examples with normal abelian defect group and abelian inertial quotient.
\end{abstract}

\section{Introduction}

Let $\cO$ be a complete discrete valuation ring with $k:=\cO/J(\cO)$ an algebraically closed field of prime characteristic $p$. Let $K$, the field of fractions of $\cO$, have characteristic zero. We take $K$ large enough, meaning that it contains all $|H|^{\nth}$ roots of unity, for all the groups $H$ involved in the rest of the paper. By a block we always mean a block $b$ of $\cO H$ for some finite group $H$.  We use $\Irr(H)$ to denote the set of irreducible characters of $H$ and $\Irr(b)$ the subset of irreducible characters lying in the block $b$.
\newline
\newline
Let $b$ be a block of a finite group $H$. The Picard group $\Pic(b)$ of $b$ consists of isomorphism classes of $b$-$b$-bimodules which induce $\cO$-linear Morita auto-equivalences of $b$. For $M,N\in\Pic(b)$, the group multiplication is given by $M\otimes_{b}N$. We will often view $M$ as an $\cO(H\times H)$-module via $(g,h).m=gmh^{-1}$, for all $g,h\in H$ and $m\in M$. This paper is concerned with $\Picent(b)$, the subgroup of $\Pic(b)$ consisting of Morita auto-equivalences that induce the trivial permutation of $\Irr(b)$.
\newline
\newline
In~\cite{eali19} it is proved that all $2$-blocks with abelian defect group of rank at most three have trivial $\Picent$. Our main result is that in certain situations $\Picent$ is always trivial (see Theorems~\ref{thm:elem_ab} and~\ref{thm:cyc_prin}).

\begin{thm*}
Let $b$ be a block with normal abelian defect group and cyclic inertial quotient or inertial quotient a product of elementary abelian groups or a principal block with abelian inertial quotient. Then $\Picent(b)$ is trivial.
\end{thm*}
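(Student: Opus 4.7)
The plan is to reduce the problem to an explicit computation on a twisted group algebra. Since $D$ is normal in $H$, by the classical source algebra theorem of K\"ulshammer for blocks with normal defect group, $b$ is Morita equivalent to a twisted group algebra of the form $\cO_\alpha[D\rtimes E]$ for some $2$-cocycle $\alpha$ of $E$ with values in $\cO^\times$. Because $\Picent$ is Morita-invariant (it is defined in terms of bimodules and characters of $b$, both Morita-intrinsic), we may replace $b$ by this algebra. In the principal block case $\alpha$ is trivial, and in the cyclic case $\alpha$ is cohomologically trivial because $H^2(E,k^\times)=0$; only in the elementary abelian case must a genuine twist be tracked.

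Next I would describe $\Pic(b)$ concretely. A Skolem--Noether type argument applied to bimodules inducing Morita self-equivalences shows that every $M\in\Pic(b)$ is isomorphic to a twisted regular bimodule ${}_\phi b_\mu$, where $\phi$ is an $\cO$-algebra automorphism of $b$ normalising the image of $D$ and $\mu$ belongs to the group of $E$-stable linear characters $\Hom(D,\cO^\times)^E$. This yields a surjection from a group built out of $\Out(D\rtimes_\alpha E)$ (preserving the $\alpha$-structure) and $\Hom(D,\cO^\times)^E$ onto $\Pic(b)$, modulo inner bimodules.

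I would then compute $\Picent(b)$ by determining which pairs $(\phi,\mu)$ act trivially on $\Irr(b)$. By Clifford theory, $\Irr(b)$ is parametrised by pairs $(\lambda,\eta)$ with $\lambda\in\Irr(D)/E$ and $\eta$ an irreducible (projective, in the twisted case) character of the stabiliser $E_\lambda$. Tensoring with $M={}_\phi b_\mu$ sends $\lambda$ to $(\lambda\circ\phi|_D^{-1})\cdot\mu$ and acts compatibly on $\eta$. Testing on the trivial character $\lambda=1_D$ immediately forces $\mu=1$; then trivial action on all $\lambda\in\Irr(D)$ forces $\phi|_D$ to be the identity (because $\Irr(D)$ separates points of $D$ for $D$ abelian); and finally triviality on the $\eta$-part forces $\phi$ to act trivially on $E$ up to an inner automorphism of $D\rtimes_\alpha E$.

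The last step, showing that the residual automorphism is actually inner, is the main obstacle and is where the hypothesis on $E$ enters. For cyclic $E$ this follows directly once the twist is trivialised. For $E$ a product of elementary abelian groups every non-identity element has prime order, so an automorphism of $D\rtimes_\alpha E$ that acts trivially on $D$ and on $E$ is pinned down by the projective characters of each elementary abelian factor, and the cohomological obstruction from $\alpha$ collapses. For the principal block with abelian $E$ the trivial character extends and inflation from $E$ supplies the extra characters needed to rule out any non-inner component. The chief technical difficulty is handling the cocycle $\alpha$ uniformly in the elementary abelian case while keeping the Clifford-theoretic parametrisation of $\Irr(b)$ under control.
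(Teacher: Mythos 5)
Your overall strategy --- reduce via K\"ulshammer to the normal-defect model, parametrise Picard elements by automorphisms, and test triviality on $\Irr(b)$ using Clifford theory --- is aligned with the paper's, but two steps that you treat as routine are in fact where the real work lies, and as stated they contain genuine gaps.

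First, the claim that ``a Skolem--Noether type argument'' shows every $M\in\Pic(b)$ (or even every $M\in\Picent(b)$) is a twisted regular bimodule is not a Skolem--Noether statement and does not follow from one. A Morita self-equivalence of a block need not be given by an algebra automorphism; for arbitrary blocks one only knows (Boltje--Kessar--Linckelmann) that Picard elements have endopermutation source. The input the paper actually uses is that elements of $\Picent$ have \emph{trivial} source, which is a theorem of Eaton--Livesey proved via the character-theoretic constraint built into the definition of $\Picent$ (they cite~\cite[Propositions 4.3, 4.4]{eali19}), and even then the resulting module is a priori induced from a twisted diagonal subgroup $\Delta\psi$, not a twist of the regular bimodule. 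The identification with a twisted regular bimodule ${}_\alpha B$ is itself a nontrivial calculation (Proposition~\ref{prop:triv_source}), requiring an extension argument and a rank count against~\cite[Lemma 2.1]{li19}. Your $\mu\in\Hom(D,\cO^\times)^E$ is also slightly misplaced: the linear characters that actually appear are characters of the inertial quotient (elements of $\Irr(G|1_C)$), not $E$-stable characters of $D$.

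Second, the step ``trivial action on all $\lambda\in\Irr(D)$ forces $\phi|_D$ to be the identity (because $\Irr(D)$ separates points)'' is too strong. The condition on $M$ is that it fixes each $\chi\in\Irr(b)$, and a $\chi$ lying over $\theta\in\Irr(D)$ is fixed as soon as $\psi^*(\theta)$ is merely $E$-\emph{conjugate} to $\theta$. Passing from ``$E$-conjugate for every $\theta$'' to ``$\psi$ is (up to an inner automorphism) the identity'' is precisely the content of Lemma~\ref{lem:HonP}(5), and it relies on decomposing $D$ into $L$-indecomposable components and choosing the test characters $\theta_i$ judiciously. Likewise, your final paragraph (that the residual automorphism on the $E$-part is inner, ``pinned down by projective characters of each elementary abelian factor'' so that ``the cohomological obstruction collapses'') is a description of the goal rather than an argument. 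In the paper, this is exactly where the hypothesis on $L$ enters, and it requires genuine machinery: writing $O_\ell(L)$ as an $\mathbb{F}_\ell$-vector space and using the codimension-one lemma (Lemma~\ref{lem:vec_spc}) to produce characters $\vartheta_m$ whose stabilisers isolate one cyclic direct factor $\cC_m$ at a time, then the extension lemma (Lemma~\ref{lem:exten}) to show the fixed characters determine $\alpha$ on $\cO\tilde\cC_m e_\varphi$, and finally the fact that the $\cC_m$ generate $O_\ell(L)$ and these generate $L$. Without something of that precision, the cyclic, elementary-abelian-product, and principal-abelian cases remain unproved.
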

Notwithstanding the above theorem, in this paper we show that blocks with non-trivial $\Picent$ do exist. Our focus is entirely on blocks with normal defect group. We construct three families of examples. The first, see Example~\ref{exam:OP}, is simply any $p$-group with a non-inner, class-preserving automorphism. The second example, see Proposition~\ref{prop:st_example}, is given by a non-inner, class-preserving automorphism of a group $H$, with normal abelian Sylow $p$-subgroup, that induces a non-trivial element of $\Picent$ for the principal block of $\cO H$. Our final example, see Proposition~\ref{prop:tensor_linear}, is concerned with a non-principal block with normal abelian defect group and abelian inertial quotient. An interesting point to note in this final case is that the relevant bimodule has vertex $\Delta D$, where $D$ is the defect group. This means the Morita auto-equivalence is simply given by tensoring with a linear character of the inertial quotient.
\newline
\newline
The following notation will hold for the remainder of the article. If $N\lhd H$, for a finite group $H$, and $\chi\in\Irr(N)$, then we denote by $\Irr(H|\chi)$ the set of irreducible characters of $H$ appearing as constituents of $\chi\uparrow_N^H$. Similarly, for a block $b$ of $H$, we define $\Irr(b|\chi):=\Irr(b)\cap\Irr(H|\chi)$. If $h\in H$, then we denote by $c_h\in\Aut(H)$ the automorphism given by $g\mapsto {}^hg:=hgh^{-1}$. For any $\cO N$-module $M$, ${}^h M$ will denote the $\cO N$-module equal to $M$ as an $\cO$-module but with the action of $N$ defined via $g.m=c_h^{-1}(g)m$, for all $g\in N$ and $m\in {}^h M$. The character ${}^h\chi\in\Irr(N)$ is defined analogously. If $X\leq H$, then we set $I_X(M):=\{h\in X|{}^hM\cong M\}$ and define $I_X(\chi)$ analogously. We write $\cO_H$ for the $\cO H$-module $\cO$ with the trivial action of $H$ and $1_H\in\Irr(H)$ will denote the trivial character of $H$. We use $e_b\in\cO H$ to signify the block idempotent of $b$ and $e_\eta\in KH$ the character idempotent associated to $\eta\in\Irr(H)$.
\newline
\newline
If $\psi\in\Aut(H)$, then we denote by $\Delta\psi:=\{(h,\psi(h))|h\in H\}\leq H\times H$. If $\psi=\Id_H$, then we just denote $\Delta\psi$ by $\Delta H$. We will also view $\psi$ as an $\cO$-linear automorphism of $\cO H$, where appropriate. For an $\cO$-algebra $A$, $\Aut_{\cO}(A)$ will stand for the set of $\cO$-algebra automorphisms of $A$. If $\alpha\in\Aut_{\cO}(b)$, we denote by ${}_\alpha b\in\Pic(b)$ the equivalence induced by $\alpha$. In other words, ${}_\alpha b=b$ as sets but with $x.m.y:=\alpha(x)my$, for all $x,y\in b$ and $m\in{}_\alpha b$.
\newline
\newline
The article is organised as follows. $\S2$ contains an assortment of lemmas that will be needed in $\S3$ and $\S4$. $\S3$ concerns Morita auto-equivalences induced by bimodules with trivial source and then goes on to prove our main theorems regarding families of blocks with trivial $\Picent$. $\S4$ gives our three classes of examples of blocks with non-trivial $\Picent$.

\section{Preliminaries}

In this section we gather together various lemmas that will be used throughout the rest of the article.

\begin{lem}\label{lem:vec_spc}
Let $V$ be an $n$-dimensional vector space over a field $\mathbb{F}$, for some $n\in\mathbb{N}$. In addition let $(V_i)_{i\in I}$ be a set of subspaces of $V$ of codimension one such that $\bigcap_{i\in I}V_i=\{0\}$. Then there exists a subset $\{i_1,\dots,i_n\}\subseteq I$ such that $\bigcap_{j=1}^nV_{i_j}=\{0\}$. Furthermore, setting $U_l:=\bigcap_{\substack{j=1\\j\neq l}}^nV_{i_j}$, for each $1\leq l\leq n$, we have that each $U_l$ has dimension one and $V=\bigoplus_{l=1}^nU_l$.
\end{lem}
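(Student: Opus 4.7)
The plan is to dualise. For each $i\in I$, the hypothesis that $V_i$ has codimension one allows us to pick a nonzero functional $f_i\in V^*$ with $\ker f_i=V_i$. The condition $\bigcap_{i\in I}V_i=\{0\}$ translates, via the standard correspondence between subspaces of $V$ and subspaces of $V^*$, into the statement that $\{f_i\mid i\in I\}$ spans $V^*$. Since $V^*$ has dimension $n$, we can extract a basis $f_{i_1},\dots,f_{i_n}$ from this spanning set.

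With such a choice of indices, $\bigcap_{j=1}^n V_{i_j}=\bigcap_{j=1}^n\ker f_{i_j}=\{0\}$, giving the first assertion. For the second, fix $1\leq l\leq n$ and observe that $U_l=\bigcap_{j\neq l}\ker f_{i_j}$. Since $f_{i_1},\dots,\widehat{f_{i_l}},\dots,f_{i_n}$ are a subset of a basis, they are linearly independent, so their common kernel has codimension $n-1$, hence $\dim U_l=1$.

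To obtain the direct sum decomposition, let $e_1,\dots,e_n\in V$ be the basis dual to $f_{i_1},\dots,f_{i_n}$, characterised by $f_{i_j}(e_l)=\delta_{jl}$. Then $e_l\in\ker f_{i_j}$ precisely when $j\neq l$, so $0\neq e_l\in U_l$, and since $U_l$ is one-dimensional we have $U_l=\mathbb{F}e_l$. Therefore $V=\bigoplus_{l=1}^n \mathbb{F}e_l=\bigoplus_{l=1}^n U_l$.

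The argument is essentially elementary linear algebra and I do not expect any real obstacle; the only point worth flagging is that the extraction of a basis from the possibly infinite family $\{f_i\mid i\in I\}$ uses only that this family spans the finite-dimensional space $V^*$, so no choice principle beyond the finite case is needed.
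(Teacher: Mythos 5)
Your proof is correct, and it takes a genuinely different route from the paper's. The paper argues directly in $V$: it chooses the $i_l$ iteratively so that $\bigcap_{j=1}^{l}V_{i_j}$ strictly decreases at each step (possible since each $V_i$ has codimension one and the total intersection is zero), concluding that exactly $n$ steps are needed; it then checks the directness of $\sum_l U_l$ by observing that $\bigl(\sum_{l\neq m}U_l\bigr)\cap U_m\subseteq V_{i_m}\cap U_m=\{0\}$. You instead dualise, translating the hypotheses into the statement that the defining functionals $f_i$ span $V^{*}$, extract a basis, and use the dual basis of $V$ to exhibit explicit generators $e_l$ of the $U_l$. Both arguments are elementary and of comparable length, but yours buys a cleaner treatment of the last two claims: the one-dimensionality of each $U_l$ drops out immediately from linear independence of $n-1$ functionals (whereas in the paper's approach this is stated as a consequence of the iteration but really requires the small extra observation that an intersection of $n-1$ hyperplanes is nonzero), and the direct sum decomposition is automatic once $U_l=\mathbb{F}e_l$, with no separate intersection computation needed. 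The paper's version avoids invoking the dual space, which keeps it self-contained at the cost of slightly more bookkeeping.
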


\begin{proof}
Choose any $i_1\in I$. Then choose the remaining $i_l$'s iteratively by demanding that $\bigcap_{j=1}^lV_{i_j}$ is strictly contained in $\bigcap_{j=1}^{l-1}V_{i_j}$ for $2\leq l\leq n$. Note that the next $V_{i_l}$ always exists, until $\bigcap_{j=1}^lV_{i_j}=\{0\}$, since $\bigcap_{i\in I}V_i=\{0\}$. In fact $\bigcap_{j=1}^lV_{i_j}=\{0\}$ precisely when $l=n$, since $\bigcap_{j=1}^{l-1}V_{i_j}$ has codimension one in $\bigcap_{j=1}^lV_{i_j}$, for $1\leq l\leq n$. In particular, each $U_l$ has dimension one. Therefore, the final claim follows since
\begin{align*}
\left(\sum_{\substack{l=1\\l\neq m}}^nU_l\right)\cap U_m\subseteq V_{i_m}\cap U_m=\bigcap_{j=1}^nV_{i_j}=\{0\},
\end{align*}
for all $1\leq m\leq n$.
\end{proof}

\begin{lem}\label{lem:exten}
Let $H$ be a finite group, $N$ a normal subgroup such that $H/N$ is abelian, $\ell$ a prime and $N\leq L\leq H$ such that $L/N$ is a cyclic $\ell$-group and $\ell\nmid[H:L]$. If $\chi\in\Irr(N)$ is $H$-stable, then $\chi$ extends to $L$ and every extension is $H$-stable.
\end{lem}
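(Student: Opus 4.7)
The plan is to treat the two assertions separately. First, for the existence of an extension, I would invoke the standard Clifford-theoretic fact that if $L/N$ is cyclic and $\chi\in\Irr(N)$ is $L$-stable (which it is, being $H$-stable), then the obstruction class in $H^2(L/N,K^\times)$ vanishes, so there exists $\tilde\chi\in\Irr(L)$ with $\tilde\chi\downarrow_N^L=\chi$. Observe that $L\lhd H$ is automatic here, since $H/N$ is abelian and $L/N\leq H/N$.

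For the stability claim, I would fix any such extension $\tilde\chi$. For each $h\in H$, the character ${}^h\tilde\chi$ lies in $\Irr(L)$ and restricts to ${}^h\chi=\chi$ on $N$, so it is another extension of $\chi$ to $L$. By Gallagher's theorem there is a unique linear character $\lambda_h\in\Irr(L/N)$, which I view as a character of $L$ by inflation, such that ${}^h\tilde\chi=\tilde\chi\cdot\lambda_h$.

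The key step is then to prove that $h\mapsto\lambda_h$ is a group homomorphism $H\to\Irr(L/N)$. Starting from ${}^{h_1h_2}\tilde\chi={}^{h_1}({}^{h_2}\tilde\chi)$ and expanding both sides via Gallagher yields the cocycle identity $\lambda_{h_1h_2}=\lambda_{h_1}\cdot{}^{h_1}\lambda_{h_2}$. Since $\lambda_{h_2}$ is inflated from $L/N$ and conjugation by $h_1$ acts trivially on $L/N$ (because $H/N$ is abelian and $L/N\leq H/N$), we get ${}^{h_1}\lambda_{h_2}=\lambda_{h_2}$, so the map is indeed a homomorphism.

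To finish, I would note that for $h\in L$ the character $\tilde\chi$ is class-invariant, so ${}^h\tilde\chi=\tilde\chi$ and hence $\lambda_h=1$. Thus $\lambda$ factors through the quotient $H/L$, which by hypothesis has order coprime to $\ell$. But $\Irr(L/N)$ is a cyclic $\ell$-group, so the homomorphism must be trivial, giving ${}^h\tilde\chi=\tilde\chi$ for every $h\in H$. The main obstacle I expect is the bookkeeping verification of the cocycle identity and of ${}^{h_1}\lambda_{h_2}=\lambda_{h_2}$; once the abelian-quotient hypothesis is used correctly, the coprimality argument is immediate.
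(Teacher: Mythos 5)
Your proof is correct, and it takes a genuinely different route from the paper's. The paper observes that $L\langle h\rangle/N$ is cyclic (being an abelian extension of the cyclic $\ell$-group $L/N$ by the cyclic $\ell'$-group $L\langle h\rangle/L$), so $\chi$ extends to $L\langle h\rangle$ in $[L\langle h\rangle:N]$ ways; a counting argument then shows every extension of $\chi$ to $L$ arises by restriction from $L\langle h\rangle$, and an extension to $L\langle h\rangle$ is automatically $h$-invariant, hence so is its restriction to $L\lhd L\langle h\rangle$. You instead stay at $L$ and package the obstruction into a map $h\mapsto\lambda_h$ with values in $\Irr(L/N)$, verify via the abelian-quotient hypothesis that it is a genuine homomorphism (not merely a crossed homomorphism), note it kills $L$, and then use the coprimality of $|H/L|$ and $|\Irr(L/N)|$ to conclude the map vanishes. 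Both arguments lean on the same two ingredients (Gallagher parametrisation of extensions over a cyclic quotient, and $\ell\nmid[H:L]$), but the paper converts the question into one about existence of a further extension while you convert it into the vanishing of a homomorphism between groups of coprime order; your version makes the role of the abelian hypothesis (trivial conjugation action on $L/N$) slightly more transparent, at the cost of a short cocycle computation.
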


\begin{proof}
Since $L/N$ is cyclic and $\chi$ is $L$-stable, $\chi$ certainly extends to $L$. Now let $h\in H$. Since $H/N$ is abelian, $L/N$ is a cyclic $\ell$-group and $\ell\nmid[H:L]$, $L\langle h\rangle/N$ is cyclic. Therefore, $\chi$ extends to $L\langle h\rangle$ in $[L\langle h\rangle:N]$ different ways. In particular, every extension of $\chi$ to $L$ extends to $L\langle h\rangle$ and so $h$ stabilises $\chi$.
\end{proof}

For a finite $p$-group $P$, we denote by $\Phi(P)$ the Frattini subgroup of $P$. For the following lemma we have in mind the semi-direct product $P\rtimes H$. We will borrow notation from this setup, for example $C_P(H)$ will denote the set of fixed points in $P$ under the action of $H$.

\begin{lem}\label{lem:HonP}
Let $P$ be a finite abelian $p$-group and $H$ an abelian $p'$-subgroup of $\Aut(P)$.
\begin{enumerate}
\item $P=C_P(H)\times[H,P]$.
\item The natural homomorphism $H\to\Aut(P/\Phi(P))$ is injective. Furthermore, $H$ acts indecomposably on $P$, that is there does not exist a non-trivial, $H$-invariant decomposition $P=P_1\times P_2$, if and only if $H$ acts indecomposably on $P/\Phi(P)$.
\item The homomorphism $H\to\Aut(\Irr(P))$, $h(\lambda)(x)=\lambda(h^{-1}(x))$, for all $h\in H$, $x\in P$ and $\lambda\in\Irr(P)$, is injective and this action of $H$ on $\Irr(P)$ is indecomposable if and only if the action of $H$ on $P$ is.
\item If $H$ does act indecomposably on $P$, then $H$ is cyclic and $C_P(h)$ is trivial for all $h\in H\backslash\{1\}$.
\item Let $\psi\in C_{\Aut(P)}(H)$ such that, for all $x\in P$, $\psi(x)=h(x)$, for some $h\in H$. Then $\psi\in H$.
\end{enumerate}
\end{lem}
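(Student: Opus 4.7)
Parts (1)--(4) are direct consequences of coprime action. For (1), since $\gcd(|H|,|P|) = 1$, the standard Fitting-type decomposition for coprime action gives $P = C_P(H) \times [H,P]$. For (2), injectivity reduces to the classical fact that the kernel of the restriction map $\Aut(P) \to \Aut(P/\Phi(P))$ is a $p$-group, which meets the $p'$-subgroup $H$ trivially; for the equivalence of indecomposabilities, the forward direction is immediate, while for the converse I would lift idempotents, using that the kernel of the induced ring map $\End_H(P) \to \End_H(P/\Phi(P))$ consists of $H$-endomorphisms with image in $\Phi(P) = P^p$, hence is nilpotent (iteration sends $P^p$ into $P^{p^2}$ and so on), so a non-trivial idempotent downstairs lifts to a non-trivial $H$-invariant direct-product decomposition upstairs. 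Part (3) is duality of finite abelian groups: $\Irr(P)$ is the dual group of $P$, the two $H$-actions are dual, injectivity holds since $\Irr(P)$ separates points, and $H$-invariant direct-product decompositions on either side correspond.

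Part (4) is then immediate from (2): $V := P/\Phi(P)$ is a faithful, indecomposable $\mathbb{F}_p[H]$-module, hence simple by Maschke, and since $H$ is abelian Schur's lemma embeds $H$ into $\End_{\mathbb{F}_p[H]}(V)^\times$, the multiplicative group of a finite field, which is cyclic. For the centraliser statement, $C_P(h)$ is $H$-invariant (as $H$ is abelian), and applying (1) with $\langle h \rangle$ in place of $H$ gives the $H$-invariant decomposition $P = C_P(h) \times [\langle h \rangle, P]$; indecomposability together with faithfulness rules out the trivial outcomes.

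The substance lies in (5). First I reduce to the indecomposable case: iterated use of (1) gives a decomposition $P = P_1 \times \cdots \times P_r$ into $H$-indecomposable summands, and the hypothesis $\psi(x) = h_x(x) \in P_i$ for $x \in P_i$ shows that $\psi$ preserves each $P_i$, so I may assume $P$ itself is $H$-indecomposable. By (4), $H$ is cyclic, $V = P/\Phi(P)$ is one-dimensional over the finite field $E := \End_{\mathbb{F}_p[H]}(V)$, and $H$ acts on $V$ by scalars via some injective character $\chi \colon H \hookrightarrow E^\times$. Since $\psi$ commutes with $H$, its induced action $\bar\psi$ on $V$ also lies in $E^\times$; evaluating the hypothesis on any $x \in P \setminus \Phi(P)$ identifies this scalar with $\chi(h_0)$ for some $h_0 \in H$. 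Setting $\alpha := h_0^{-1}\psi$, $\alpha$ acts trivially on $V$, still commutes with $H$, and still satisfies $\alpha(x) = h'_x(x)$ with $h'_x \in H$. For any $x \notin \Phi(P)$, the image $\bar x \in V$ is a non-zero fixed vector of $h'_x$ acting as the scalar $\chi(h'_x)$, which forces $\chi(h'_x) = 1$ and hence $\alpha(x) = x$. Since $P \setminus \Phi(P)$ generates $P$ by the Burnside basis theorem, $\alpha = 1$ and $\psi = h_0 \in H$. The main obstacle I anticipate is making the converse half of (2) watertight and handling the scalar bookkeeping in (5); once $\bar\psi$ is pinned down as $\chi(h_0)$, the Frattini-generating argument cleanly kills the remaining $p$-part.
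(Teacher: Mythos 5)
Your handling of (1)--(4) is sound and tracks the paper closely in spirit. For (2), you replace the paper's citation of \cite[Lemma 2.5]{li2019} with an explicit idempotent-lifting argument through the nil kernel of $\End_H(P)\to\End_H(P/\Phi(P))$; this works and is arguably more self-contained. For (4), the Schur-plus-finite-field argument for cyclicity and the application of (1) to $\langle h\rangle$ are exactly right and mirror the paper's use of \cite[Lemma 2.6]{li2019} and part (1).

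For (5), your route is genuinely different from the paper's. The paper runs a three-stage reduction (elementary abelian indecomposable, elementary abelian, general) using Maschke: $C_P(\psi)$ is $H$-invariant and so by complete reducibility cannot be a non-trivial proper subgroup in the indecomposable elementary abelian case; the general case is then handled by first killing the action on $P/\Phi(P)$, then showing $C_H(x\Phi(P))=\{1\}$ for a suitable $x$. Your argument instead identifies the action on $V=P/\Phi(P)$ as living in the field $E=\End_{\mathbb{F}_p[H]}(V)$, normalises $\psi$ to be trivial on $V$ by subtracting off $\chi(h_0)$, and then uses Frattini generation to propagate triviality from $P\setminus\Phi(P)$ to all of $P$. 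This is clean and is a legitimate alternative.

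However, there is a real gap in your reduction to the indecomposable case. You observe correctly that $\psi$ preserves each $P_i$, and then say ``so I may assume $P$ itself is $H$-indecomposable.'' But the conclusion of the indecomposable case applied to each $P_i$ only gives that $\psi|_{P_i}$ agrees with \emph{some} $h_i\in H$ (more precisely with the image of some $h_i$ in $H/C_H(P_i)$), and these $h_i$ can in principle differ between components, so you cannot conclude $\psi\in H$ without more work. The patch is to invoke the \emph{global} hypothesis on a single element $x=(x_1,\dots,x_r)$ with each $x_i\neq 1$: there is one $h_0\in H$ with $\psi(x)=h_0(x)$, hence $\psi(x_i)=h_0(x_i)$ for every $i$; comparing with $\psi(x_i)=h_i(x_i)$ gives that $h_ih_0^{-1}$ fixes $x_i\neq 1$, so by part (4) $h_i$ and $h_0$ have the same image in $\Aut(P_i)$, and therefore $\psi=h_0$ on all of $P$. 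Equivalently, you can skip the component-wise reduction altogether and run your $\alpha:=h_0^{-1}\psi$ argument on all of $P$ at once, but you then need to note that $\bar\alpha$ being a componentwise scalar forces $\alpha$ to fix $(P_1\setminus\Phi(P_1))\times\dots\times(P_r\setminus\Phi(P_r))$ pointwise, and an explicit small calculation (fix one coordinate, vary the others against the chosen $x_i$) is needed to extend this to all of $P$, since for $p=2$ the set $P_i\setminus\Phi(P_i)$ can be a single element and does not obviously generate. The paper's Stage 3 has exactly the same latent issue in the line ``and hence on all of $P$,'' so you are in good company, but you should make the patching explicit either way.
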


\begin{proof}$ $
\begin{enumerate}
\item This is~\cite[$\S5$, Theorem 2.3]{gor80}.
\item The fact that the homomorphism is injective follows from~\cite[$\S5$, Theorem 1.4]{gor80}. The indecomposablity statement is~\cite[Lemma 2.5]{li2019}.
\item We can identify $P$ with $\Irr(\Irr(P))$ as $H$-sets via $x\mapsto(\lambda\mapsto\lambda(x))$. Injectivity follows since, if some $h\in H$ fixes $\Irr(P)$ pointwise, then $h$ also fixes $\Irr(\Irr(P))$ pointwise and hence also $P$. If $P=P_1\times P_2$ is an $H$-invariant decomposition of $P$, then $\Irr(P)=\Irr(P,1_{P_1})\times\Irr(P,1_{P_2})$ is an $H$-invariant decomposition of $\Irr(P)$. The reverse implication follows, once again, by identifying $P$ with $\Irr(\Irr(P))$.
\item \cite[Lemma 2.6]{li2019} gives that $H$ is cyclic. Next let $h\in H$. If $C_P(h)$ is non-trivial then, by part (1), $P=C_P(h)\times[\langle h\rangle,P]$ is an $H$-invariant decomposition. Therefore, since $H$ acts indecomposably on $P$, $C_P(h)=P$ and $h=1$.
\item Let's first assume that $P$ is elementary abelian and $H$ acts indecomposably. By post-composing with a suitable element of $H$, we may assume that $\psi$ has a non-trivial fixed point $x\in P$. Now $\psi$ must fix all of $P$, since otherwise $C_P(\psi)$ is a non-trivial, proper $H$-invariant subgroup of $P$ which, by~\cite[$\S3$, Theorem 3.2]{gor80}, contradicts the indecomposability of the action of $H$ on $P$.
\newline
\newline
We next drop the assumption that $H$ acts indecomposably. Decompose $P=P_1\times\dots\times P_n$ into indecomposable components. First note that the hypotheses of the lemma ensure that $\psi$ respects this decomposition. Now choose $x_i\in P_i\backslash\{1\}$, for $1\leq i\leq n$. As above, we may assume that $\psi$ fixes $(x_1,\dots,x_n)\in P$ and hence each $x_i$. Also as above, we must now have $\psi$ acting as the identity on each $P_i$ and therefore all of $P$.
\newline
\newline
For the general case, by the previous paragraph, we may assume that $\psi$ induces the trivial automorphism of $P/\Phi(P)$. Decompose $P=P_1\times\dots\times P_n$ into indecomposable components. Let $x_i\in P_i\backslash\Phi(P_i)$, for $1\leq i\leq n$ and set $x:=(x_1,\dots,x_n)\in P$. Then
\begin{align*}
C_H(x\Phi(P))=\bigcap_{i=1}^nC_H(x_i\Phi(P_i))=\bigcap_{i=1}^nC_H(P_i/\Phi(P_i))=\bigcap_{i=1}^nC_H(P_i)=\{1\},
\end{align*}
where the second equality follows from part (4) and the third follows from part (2). Therefore, any $h\in H$ such that $\psi(x)=h(x)$ must be trivial. So $\psi$ is trivial on $(P_1\backslash\Phi(P_1))\times\dots\times(P_n\backslash\Phi(P_n))$ and hence on all of $P$.
\end{enumerate}
\end{proof}

\section{Blocks with trivial $\Picent$}

We set the following notation that will hold for the remainder of this section. Let $D$ be a finite $p$-group, $E$ a finite $p'$-group and $Z\leq E$ a central, cyclic subgroup such that we can identify $L:=E/Z$ with a subgroup of $\Aut(D)$. Through this identification we define $G:=D\rtimes E$ and $B:=\cO Ge_\varphi$ for some fixed $\varphi\in\Irr(Z)$. Since $D\lhd G$, any block idempotent of $\cO G$ is supported on $C_G(D)=Z(D)\times Z$. Therefore, $B$ is a block of $\cO G$ with defect group $D$. We set $C:=D\times Z$. $\cT(B)$ will denote the subgroup of $\Pic(B)$ consisting of bimodules that have trivial source when viewed as $\cO(G\times G)$-modules.

\begin{prop}\label{prop:triv_source}
Let $M\in\cT(B)$.
\begin{enumerate}
\item $M$ has vertex $\Delta\psi$ for some $\psi\in N_{\Aut(D)}(L)$.
\end{enumerate}
We denote by $\psi_L\in\Aut(L)$ the automorphism of $L$ induced by conjugation by $\psi$.
\begin{enumerate}
\item[2.] $M\cong {}_\alpha B$, where $\alpha\in\Aut_{\cO}(B)$ satisfies $\alpha(xe_\varphi)=\psi(x)e_\varphi$, for all $x\in D$ and $\alpha(\cO gZe_\varphi)=\cO\psi_L(gZ)e_\varphi$, for all $g\in E$.
\item[3.] If $M$ has vertex $\Delta D$, then $M$ is just given by tensoring with some linear character $\lambda\in\Irr(G|1_C)$. Moreover, different $\lambda$'s give non-isomorphic $M$'s.
\end{enumerate}
\end{prop}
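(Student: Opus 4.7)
The plan has two parts, both building on part~(2) of the proposition.

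For the first claim, suppose $M$ has vertex $\Delta D$; then part~(2) forces $\psi=\Id_D$ and hence $\psi_L=\Id_L$. The associated $\alpha$ therefore fixes each $xe_\varphi$ for $x\in D$ and preserves each rank-one $\cO$-line $\cO ge_\varphi$ for $g\in E$, so there exist scalars $\lambda(g)\in\cO^\times$ with $\alpha(ge_\varphi)=\lambda(g)ge_\varphi$. Multiplicativity of $\alpha$ forces $\lambda\colon E\to\cO^\times$ to be a homomorphism; the identity $ze_\varphi=\varphi(z)e_\varphi$ forces $\lambda|_Z=1$; and extending by $\lambda|_D=1$ produces $\lambda\in\Irr(G|1_C)$. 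A routine check then identifies ${}_\alpha B$ with the bimodule effecting tensor-product with the one-dimensional $\cO G$-module $\cO_\lambda$.

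For the second claim, the goal is to show the assignment $\lambda\mapsto[{}_{\alpha_\lambda}B]\in\Pic(B)$ is injective; equivalently, that $\alpha_\lambda$ is inner only when $\lambda=1$. Since inner automorphisms act trivially on $\Irr(B)$, inner-ness of $\alpha_\lambda$ forces $\eta\otimes\lambda=\eta$ for every $\eta\in\Irr(B)$. To conclude $\lambda=1$ I plan to exhibit, for every $g\in G$ with $\lambda(g)\neq 1$, some $\eta\in\Irr(B)$ with $\eta(g)\neq 0$. The natural first source is inflations from $E$: each $\mu\in\Irr(E|\varphi)$ inflates through $G\to G/D=E$ to a character $\tilde\mu\in\Irr(B)$ with $\tilde\mu(e)=\mu(e)$, and the existence of a suitable $\mu$ for a prescribed $e\in E$ amounts to the non-vanishing in $\cO E$ of $\hat{C}^E_e\,e_\varphi=\tfrac{1}{|Z|}\sum_{z\in Z}\varphi(z^{-1})\hat{C}^E_{ze}$.

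The main obstacle is that this rearranged class sum can vanish: distinct $z\in Z$ may collapse $\hat{C}^E_{ze}$ to the same $E$-class sum, producing cancellations controlled by the restriction of $\varphi$ to the subgroup $Z_e:=\{z\in Z:ez\sim_E e\}$. When inflations fail to separate $\lambda$, I would replace them with characters of $B$ above $\mu_D\otimes\varphi\in\Irr(C|\varphi)$ for $\mu_D\in\Irr(D)\setminus\{1_D\}$, and use the Clifford correspondence to compute their values at chosen lifts of non-trivial elements of $G/C=L$ and complete the separation.
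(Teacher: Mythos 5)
Your proposal only addresses part~(3), taking parts~(1) and~(2) as given; those parts carry most of the weight of the proposition (the vertex analysis via~\cite{bkl18}, the stabiliser computation $S_\psi=I_{G\times G}({}_\psi(\cO D))$, the extension of the source to $S_\psi$, and the identification $M\cong{}_\alpha B$). That said, your argument for the forward direction of~(3) is sound and is essentially the paper's: with $\psi=\Id_D$ and $\psi_L=\Id_L$, part~(2) forces $\alpha$ to preserve each line $\cO ge_\varphi=\cO gZe_\varphi$, so $\alpha(ge_\varphi)=\lambda(g)ge_\varphi$, and the algebra-automorphism conditions together with $ze_\varphi=\varphi(z)e_\varphi$ produce a linear $\lambda\in\Irr(G|1_C)$.

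The injectivity argument, however, has a fatal gap. You reduce correctly to showing that $\alpha_\lambda$ inner forces $\lambda=1$, but then you throw away the hypothesis ``inner'' and retain only its consequence ``$\eta\otimes\lambda=\eta$ for all $\eta\in\Irr(B)$'', and try to deduce $\lambda=1$ from that alone. This implication is false, and its failure is the whole raison d'\^etre of part~(3): Proposition~\ref{prop:tensor_linear} exhibits a block $B$ of exactly this form and a non-trivial $\lambda\in\Irr(G|1_C)$ with $\eta\otimes\lambda=\eta$ for \emph{every} $\eta\in\Irr(B)$, and then cites part~(3) of Proposition~\ref{prop:triv_source} to conclude that tensoring by $\lambda$ is nonetheless a non-trivial element of $\Picent(B)$. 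Concretely, for that $B$ every $\eta\in\Irr(B)$ vanishes at every $g$ with $\lambda(g)\neq 1$, so no choice of $\eta$ --- inflations from $E$, Clifford correspondents above $\mu_D\otimes\varphi$, or otherwise --- can ``complete the separation'' you hope for. The paper avoids this by arguing on the module side rather than on characters: once $\Delta D$ is fixed as a vertex, $N$ is the \emph{unique} direct summand of $M\downarrow^{G\times G}_{S_{\Id_D}}$ extending $\cO D\otimes_{\cO}\cO Ze_\varphi$ (because $S_{\Id_D}=I_{G\times G}(\cO D)$ and $M\cong N\uparrow_{S_{\Id_D}}^{G\times G}$), so $N$ is an invariant of the isomorphism class of $M$; $N$ determines the linear character $\chi\in\Irr(E_{\Id_D}|\varphi\otimes\varphi^{-1})$ afforded by $(K\otimes_{\cO}N)e_{1_D}$; and $\chi(g,g)=\lambda(g)$ for $g\in E$ recovers $\lambda$. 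You need an argument of this module-theoretic type; a purely character-theoretic separation cannot succeed here.
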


\begin{proof}$ $
\begin{enumerate}
\item By~\cite[Theorem 1.1(i)]{bkl18}, $M$ has vertex $\Delta\psi$ for some $\psi\in\Aut(D,\cF)$, where $\cF$ is the fusion system on $D$ determined by $B$ and $\Aut(D,\cF)$ is the subgroup of $\Aut(D)$ which stabilises $\cF$. In particular, $\psi\in N_{\Aut(D)}(L)$.
\item Since $M$ has vertex $\Delta \psi$ and $\cO_{\Delta \psi}\uparrow_{\Delta \psi}^{D\times D}\cong{}_\psi(\cO D)$ is indecomposable, $M$ is a direct summand of ${}_\psi(\cO D)\uparrow_{D\times D}^{G\times G}$. Now let $(g,h)\in E\times E$. Then ${}^{(g,h)}({}_{\psi}(\cO D))$ has vertex
\begin{align}
\begin{split}\label{algn:vert}
{}^{(g,h)}(\Delta \psi)=&\{({}^gx,{}^h\psi(x)|x\in D\}=\{(x,{}^h\psi({}^{g^{-1}}x))|x\in D\}\\
=&\Delta(hZ\circ \psi\circ g^{-1}Z),
\end{split}
\end{align}
where we are viewing $gZ,hZ\in\Aut(D)$. Now if $hZ=\psi_L(gZ)$, then
\begin{align*}
hZ\circ \psi\circ g^{-1}Z=\psi_L(gZ)\circ \psi\circ g^{-1}Z=\psi \circ gZ\circ\psi^{-1}\circ \psi\circ g^{-1}Z=\psi.
\end{align*}
Therefore, ${}^{(g,h)}(\Delta \psi)=\Delta \psi$ and $S_\psi\leq I_{G\times G}({}_\psi(\cO D))$, where $S_\psi\leq G\times G$ is generated by $D\times D$ and
\begin{align*}
E_\psi:=\{(g,h)\in E\times E|\psi_L(gZ)=hZ\}.
\end{align*}
If $I_{G\times G}({}_\psi(\cO D))$ properly contains $S_\psi$ then there exists some $(g,1)\in I_{G\times G}({}_\psi(\cO D))$, with $g\in E\backslash Z$. Since $p\nmid|L|$, $gZ$ has order prime to $p$ and so cannot be an inner automorphism of $D$. Suppose ${}^{(x,y)}(\Delta \psi)=\Delta(\psi\circ g^{-1}Z)$, for some $(x,y)\in D\times D$. Then, as in (\ref{algn:vert}), $c_y\circ\psi\circ c_{x^{-1}}=\psi\circ g^{-1}Z$. Therefore,
\begin{align}\label{algn:not_inner}
c_{\psi^{-1}(y)}=\psi^{-1}\circ c_y\circ\psi=g^{-1}Z\circ c_x,
\end{align}
a contradiction as $gZ$ is not inner, and so $S_\psi= I_{G\times G}({}_\psi(\cO D))$. Furthermore, ${}_\psi(\cO D)$ must extend to an $\cO S_\psi$-module since if it didn't, then
\begin{align*}
\rk_{\cO}(M)>\rk_{\cO}({}_\psi(\cO D)).[G\times G:S_\psi]=|D|.|L|=\rk_{\cO}(B),
\end{align*}
contradicting~\cite[Lemma 2.1]{li19}. Therefore $M\cong N\uparrow_{S_\psi}^{G\times G}$, where $N\downarrow^{S_\psi}_{D\times D}\cong {}_\psi(\cO D)$. Moreover, since $e_\varphi M e_\varphi=M$, $N\downarrow^{S_\psi}_{C\times C}\cong {}_\psi(\cO D)\otimes_{\cO}\cO Ze_\varphi$. Now
\begin{align}\label{algn:triv_D}
\begin{split}
({}_\psi(KD)\otimes_K KZe_\varphi)\uparrow_{C\times C}^{S_\psi}e_{1_D} \cong&({}_\psi(KD)e_{1_D}\otimes_K KZe_\varphi)\uparrow_{C\times C}^{S_\psi}\\
\cong&\bigoplus_{\chi\in\Irr(E_\psi|\varphi\otimes\varphi^{-1})}KS_\psi e_{\chi'},
\end{split}
\end{align}
where $\chi':=\Inf^{S_\psi}_{E_\psi}(\chi)$, the inflation of $\chi$ to $S_\psi$. Since $N$ is a direct summand of $({}_\psi(\cO D)\otimes_{\cO} \cO Ze_\varphi)\uparrow_{C\times C}^{S_\psi}$ and $\dim_K((K\otimes_{\cO}N)e_{1_D})=1$, we have that $(K\otimes_{\cO}N)e_{1_D}\cong KS_\psi e_{\chi'}$ as $KS_\psi$-modules, for some linear character $\chi\in \Irr(E_\psi|\varphi\otimes\varphi^{-1})$ and, for each such $\chi$, up to isomorphism, there is at most one extension $N_\chi$ of ${}_\psi(\cO D)\otimes_{\cO}\cO Ze_\varphi$ to an $\cO S_\psi$-module with $(K\otimes_{\cO}N_\chi)e_{1_D}\cong KS_\psi e_{\chi'}$. Until further notice we fix the appropriate linear $\chi\in \Irr(E_\psi|\varphi\otimes\varphi^{-1})$ such that $N\cong N_\chi$ as $\cO S_\psi$-modules.
\newline
\newline
Since $p\nmid|E_\psi|$, $\chi(g,h)\in\cO$, for all $(g,h)\in E_\psi$. We define $\alpha\in\Aut_{\cO}(B)$ via $\alpha(xe_\varphi)=\psi(x)e_\varphi$, for all $x\in D$ and $\alpha(ge_\varphi)=\chi(g,h)he_\varphi$, for all $(g,h)\in E_\psi$. One can readily check that $\alpha$ is a well-defined $\cO$-algebra automorphism of $B$. We demonstrate the most difficult condition:
\begin{align*}
&\alpha(ge_\varphi)\alpha(xe_\varphi)\alpha(g^{-1}e_\varphi)=h\psi(x)h^{-1}e_\varphi=hZ(\psi(x))e_\varphi=\psi_L(gZ)(\psi(x))e_\varphi\\
&=(\psi\circ gZ\circ \psi^{-1})(\psi(x))e_\varphi=\psi(gZ(x))e_\varphi=\alpha(gxg^{-1}e_\varphi),
\end{align*}
where we view $gZ,hZ\in\Aut(D)$ and the third equality follows from the fact that $(g,h)\in E_\psi$. We claim that $N\uparrow_{S_\psi}^{G\times G}\cong {}_\alpha B$.
\newline
\newline
Set $G_\psi:=\Delta\psi\rtimes E_\psi$ and $\cO_\alpha$ the $\cO G_\psi$-module $\cO$ affording $\Inf_{E_\psi}^{G_\psi}(\chi)$. Since $\langle e_\varphi\rangle_{\cO}\subseteq {}_\alpha B$ affords $\Inf_{E_\psi}^{G_\psi}(\chi)$, ${}_\alpha B\cong \cO_\alpha\uparrow_{G_\psi}^{G\times G}$. Next we consider the $\cO S_\psi$-module $\cO_\alpha\uparrow_{G_\psi}^{S_\psi}$. Now
\begin{align*}
\cO_\alpha\uparrow_{G_\psi}^{S_\psi}\downarrow^{S_\psi}_{D\times D}\cong \cO_\alpha\downarrow^{G_\psi}_{\Delta \psi}\uparrow_{\Delta \psi}^{D\times D}\cong \cO_{\Delta \psi}\uparrow_{\Delta\psi}^{D\times D}\cong {}_\psi(\cO D).
\end{align*}
Therefore, $\cO_\alpha\uparrow_{G_\psi}^{S_\psi}$ is an extension of ${}_\psi(\cO D)\otimes_{\cO}\cO Ze_\varphi$ to $\cO S_\psi$. In particular, $\dim_K((K\otimes_{\cO}\cO_\alpha\uparrow_{G_\psi}^{S_\psi})e_{1_D})=1$. Hence, since
\begin{align*}
\langle \Inf_{E_\psi}^{G_\psi}(\chi)\uparrow_{G_\psi}^{S_\psi},\Inf_{E_\psi}^{S_\psi}(\chi) \rangle_{S_\psi}&=\langle \Inf_{E_\psi}^{G_\psi}(\chi),\Inf_{E_\psi}^{S_\psi}(\chi)\downarrow^{S_\psi}_{G_\psi} \rangle_{G_\psi}\\
&=\langle \Inf_{E_\psi}^{G_\psi}(\chi),\Inf_{E_\psi}^{G_\psi}(\chi)\rangle_{G_\psi}=1,
\end{align*}
$(K\otimes_{\cO}\cO_\alpha\uparrow_{G_\psi}^{S_\psi})e_{1_D}$ affords $\Inf_{E_\psi}^{S_\psi}(\chi)$. Therefore, by the comments following (\ref{algn:triv_D}) and the fact that $(K\otimes_{\cO}N)e_{1_D}$ also affords $\Inf_{E_\psi}^{S_\psi}(\chi)$, $N\cong \cO_\alpha\uparrow_{G_\psi}^{S_\psi}$ and so $M\cong {}_\alpha B$.
\item What was proved in part (2) was essentially that any $M\in\cT(B)$ is uniquely determined by its vertex, say $\Delta\psi$, and the linear character $\chi\in\Irr(E_\psi|\varphi\otimes\varphi^{-1})$ afforded by $(K\otimes N)e_{1_D}$, where $N$ is an extension of ${}_\psi(\cO D)\otimes_{\cO}\cO Ze_\varphi$ to an $\cO S_\psi$-module and $M\cong N\uparrow_{S_\psi}^{G\times G}$. If $M\in\cT(B)$ induces the equivalence given by tensoring with $\lambda$, a linear character in $\Irr(G|1_C)$, then $\psi$ can be taken to be $\Id_D$ and $\alpha$ to be given by $\alpha(ge_\varphi)=\lambda(g)ge_\varphi$, for all $g\in G$. The corresponding $\chi$ is then given by $\chi(g,h)=\lambda(g)\varphi(gh^{-1})$, for all $(g,h)\in E_{\Id_D}=(Z\times Z).(\Delta E)$. Since $\chi$ is a linear character in $\Irr(E_{\Id_D}|\varphi\otimes\varphi^{-1})$, in fact every $\chi$ is of this form for some $\lambda\in\Irr(G|1_C)$. Therefore, every $M\in\cT(B)$, with vertex $\Delta D$, is given by tensoring with some linear character $\lambda\in\Irr(G|1_C)$.
\newline
\newline
Next suppose $M$ is given by tensoring with some linear character $\lambda\in\Irr(G|1_C)$. As shown above $M$ has vertex $\Delta D$. Let $N$ be the extension of $\cO D\otimes_{\cO}\cO Ze_\varphi$ to $\cO S_{\Id_D}$ from the proof of part (2). Since $S_{\Id_D}=I_{G\times G}(\cO D)$ and $M\cong N\uparrow_{S_{\Id_D}}^{G\times G}$, $N$ is in fact the unique summand of $M\downarrow^{G\times G}_{S_{\Id_D}}$ that extends $\cO D\otimes_{\cO}\cO Ze_\varphi$. Therefore, once we've fixed $\Delta D$ as a vertex, $\chi$ is uniquely determined by $M$. Finally, by the previous paragraph, $\lambda$ is uniquely determined by $\chi$.
\end{enumerate}
\end{proof}

\begin{thm}\label{thm:elem_ab}
Let $b$ be a block with normal abelian defect group and inertial quotient a product of elementary abelian groups. Then $\Picent(b)$ is trivial.
\end{thm}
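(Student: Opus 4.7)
By standard source-algebra reductions it suffices to treat the case $b=B=\cO Ge_\varphi$ in the notation of this section, with $G=D\rtimes E$, $Z\leq Z(E)$ cyclic, $L=E/Z\hookrightarrow\Aut(D)$, and $L$ a direct product of elementary abelian groups. Let $M\in\Picent(B)$.

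The first key step is to show $M\in\cT(B)$. For blocks with normal abelian defect group every $M\in\Pic(B)$ has an endo-permutation source over $\Delta D$, which for abelian $D$ is a linear character of $D$, and I would argue that the Picent hypothesis forces this source to be trivial. Hence Proposition~\ref{prop:triv_source} applies and $M\cong{}_\alpha B$, where $\alpha\in\Aut_\cO(B)$ restricts to $\psi\in N_{\Aut(D)}(L)$ on $D$ and induces $\psi_L\in\Aut(L)$ on $L=E/Z$.

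Next I would use the Picent condition, $\eta\circ\alpha=\eta$ for every $\eta\in\Irr(B)=\Irr(G|\varphi)$, to constrain $\psi$. Since $\varphi$ extends to the $p'$-group $I_E(\lambda)\supseteq Z$, every $L$-orbit on $\Irr(D)$ supports some $\eta\in\Irr(G|\varphi)$, so comparing $\eta|_D$ with $(\eta\circ\alpha)|_D$ forces $\psi(\lambda)\in L\cdot\lambda$ for every $\lambda\in\Irr(D)$. Examining instead $\eta\in\Irr(E|\varphi)=\Irr(G|\varphi)\cap\Irr(G|1_D)$ and using that $L$ is a direct product of elementary abelian groups --- so that the twisted group algebra $\cO Ee_\varphi$ affords enough characters to separate $L$ --- should force $\psi_L=\Id_L$, and hence $\psi\in C_{\Aut(D)}(L)$. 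Lemma~\ref{lem:HonP}(5), applied via the identification of $L$-sets $D\cong\Irr(\Irr(D))$ from Lemma~\ref{lem:HonP}(3), then yields $\psi\in L$. Consequently $M$ has vertex $G\times G$-conjugate to $\Delta D$, and by Proposition~\ref{prop:triv_source}(3), $M$ is given by tensoring with some linear character $\mu\in\Irr(G|1_C)$ pulled back from $L$.

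Finally, the Picent condition $\mu\eta=\eta$ for every $\eta\in\Irr(G|\varphi)$ translates, via Clifford theory applied to each $\lambda\in\Irr(D)$, into $\mu|_{I_E(\lambda)}=1$; passing to $L=E/Z$ and viewing each elementary abelian factor as an $\mathbb{F}_\ell$-vector space with the inertias $I_L(\lambda)$ as codimension-one subspaces, Lemma~\ref{lem:vec_spc} forces $\mu=1_G$, so $M$ is trivial in $\Pic(B)$. The main obstacles are establishing $\Picent(B)\subseteq\cT(B)$ and forcing $\psi_L=\Id_L$; the latter is precisely where the elementary-abelian-product hypothesis on $L$ enters crucially, while the closing application of Lemma~\ref{lem:vec_spc} is a comparatively clean character-theoretic step.
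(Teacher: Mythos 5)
Your overall scaffolding is recognisable --- reduce to $B=\cO Ge_\varphi$ via K\"ulshammer, invoke Proposition~\ref{prop:triv_source} once trivial source is established, constrain $\psi$ via the Picent condition, finish with Lemma~\ref{lem:HonP}(5) and Lemma~\ref{lem:vec_spc} --- but the central step has a genuine gap and the route you propose through it does not work.

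The claim that examining $\eta\in\Irr(E|\varphi)$ ``should force $\psi_L=\Id_L$'' because ``$\cO Ee_\varphi$ affords enough characters to separate $L$'' is false in general. Take $Z=Z(E)\cong C_\ell$ and $E$ extraspecial of order $\ell^3$, so that $L=(C_\ell)^2$ is a perfectly admissible elementary abelian inertial quotient; then $\cO Ee_\varphi\cong\Mat_\ell(\cO)$ has a single irreducible character, and \emph{every} $\psi_L\in\Aut(L)$ fixes $\Irr(E|\varphi)$ pointwise. The characters that factor through $E$ simply do not see $\psi_L$. The paper therefore cannot, and does not, decouple the argument into ``first force $\psi_L=\Id_L$ from $\Irr(E|\varphi)$, then kill a residual linear character $\mu$.'' Instead it works with characters $\vartheta_m\otimes\varphi\in\Irr(C)$ lying over carefully chosen non-trivial characters $\vartheta_m$ of $D$ (built via Lemma~\ref{lem:vec_spc} applied to the subspaces $O_\ell(C_L(D_i))$ of $O_\ell(L)$, for each prime $\ell\mid|L|$), and then uses Lemma~\ref{lem:exten} --- which your proposal never invokes --- to extend $\vartheta_m\otimes\varphi$ to $D\rtimes\tilde\cC_m$ with every extension $\tilde I_m$-stable. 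Tracking $\alpha^*$ through these extensions pins down $\alpha$ on $\cO\tilde\cC_m e_\varphi$ for each $m$, and since the $\cC_m$ generate $O_\ell(L)$ for each $\ell$, this shows $\alpha$ is the identity on all of $\cO Ee_\varphi$ in one stroke (so both $\psi_L=\Id_L$ \emph{and} the residual twist vanish simultaneously). Only after that does Lemma~\ref{lem:HonP}(5) enter to put $\psi$ inside $E$, followed by a second pass of the whole argument once $\psi$ has been normalised to the identity. Your closing application of Lemma~\ref{lem:vec_spc} to the inertias $I_L(\lambda)$ is essentially the right picture of where the elementary-abelian hypothesis bites, but it lives \emph{inside} the extension argument of Lemma~\ref{lem:exten}, not as a clean epilogue after $\psi_L$ has already been dispatched.

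Two smaller points. First, the opening reduction $\Picent(B)\subseteq\cT(B)$ is indeed the content of~\cite[Propositions 4.3, 4.4]{eali19}, so you are right to flag it as a prerequisite, but ``I would argue that the Picent hypothesis forces the source to be trivial'' is not yet a proof; it needs that citation or an argument of comparable weight. Second, even after $\psi\in L$ is established, one cannot simply read off $\mu$ from Proposition~\ref{prop:triv_source}(3) and be done: replacing $\alpha$ by $\alpha\circ c_g$ changes the character data, which is precisely why the paper reruns the $\ell$-by-$\ell$ argument a second time.
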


\begin{proof}
By~\cite[Theorem A]{ku85} (see also~\cite[Theorem 6.14.1]{li18b} for a more detailed description), we may assume $b$ is of the form of $B$ as described just before Proposition~\ref{prop:triv_source}. Note that in~\cite[Theorem 6.14.1]{li18b}, the isomorphism classes of the defect group and inertial quotient do not change when we move from $b$ to $B$. In other words $D$ is abelian and $L=E/Z$ is a product of elementary abelian groups. Let $M\in \Picent(B)$. By~\cite[Propositions 4.3,4.4]{eali19}, $M\in\cT(B)$ and so we let $\psi$ and $\alpha$ be as in Proposition~\ref{prop:triv_source}.
\newline
\newline
We will use $\psi^*$ to denote the self-bijection of $\Irr(D)$ given by $\psi^*(\chi)(x)=\chi(\psi^{-1}(x))$, for all $x\in D$ and $\chi\in\Irr(D)$. For any $C\leq H\leq G$, we denote by ${}^\alpha H$ the unique subgroup $C\leq {}^\alpha H\leq G$ such that $\cO({}^\alpha H)e_\varphi=\alpha(\cO He_\varphi)$ or equivalently ${}^\alpha H/C=\psi_L(H/C)$. For any such $H$, we will use $\alpha^*$ to denote the bijection $\Irr(H|\varphi)\to\Irr({}^\alpha H|\varphi)$ given by $\alpha^*(\chi)(x)=\chi(\alpha^{-1}(x))$, for all $x\in {}^\alpha H$ and $\chi\in\Irr(H|\varphi)$. Since $M\in \Picent(B)$, when $H=G$, $\alpha^*$ is just the identity on $\Irr(B)$. Note that, since $\alpha$ permutes the left cosets of $C$ in $G$, $\alpha^*(\chi)\uparrow_{{}^\alpha H}^G=\alpha^*(\chi\uparrow_H^G)$, for any $C\leq H\leq G$ and $\chi\in\Irr(H|\varphi)$.
\newline
\newline
Decompose $D=D_1\times\dots\times D_n$ into indecomposable components with respect to the action of $L$. Let $\theta_i\in\Irr(D_i)\backslash\{1_{D_i}\}$, for each $1\leq i\leq n$ and set $\theta:=\theta_1\otimes\dots\otimes\theta_n\in\Irr(D)$. Since $\alpha^*(\theta\otimes\varphi)\uparrow_C^G=\alpha^*((\theta\otimes\varphi)\uparrow_C^G)=(\theta\otimes\varphi)\uparrow_C^G$, $\alpha^*(\theta\otimes\varphi)=\psi^*(\theta)\otimes\varphi$ must be conjugate to $\theta\otimes\varphi$ via an element of $E$. Therefore, by composing $\psi$ and $\alpha$ with an appropriately chosen $c_g$, we may assume that $\psi^*(\theta)=\theta$.
\newline
\newline
By an abuse of notation, we view each $\theta_i\in\Irr(D)$ by letting it act trivially on all the other $D_j$'s. An identical argument to the previous paragraph applied to each $\theta_i\otimes\varphi\in\Irr(C)$, for $1\leq i\leq n$, and the fact that $\psi^*$ already fixes $\theta$, gives that each $\theta_i$ is also fixed by $\psi^*$. In addition , by parts (3) and (4) of Lemma~\ref{lem:HonP}, $I_L(\theta_i)=C_L(D_i)$ and $I_E(\theta_i)=C_E(D_i)$.
\newline
\newline
Until further notice we fix a prime $\ell\mid|L|$. Since $L$ is a product of elementary abelian groups, $O_\ell(L)\cong (C_\ell)^t$, for some $t\in\mathbb{N}$. Therefore, by part (4) of Proposition~\ref{lem:HonP}, for each $1\leq i\leq n$,
\begin{align*}
O_\ell(L/I_L(\theta_i))=O_\ell(L/C_L(D_i))\cong \{1\}\text{ or }C_\ell.
\end{align*}
Also, since we're identifying $L$ with a subgroup of $\Aut(D)$,
\begin{align*}
\bigcap_{i=1}^n O_\ell(I_L(\theta_i))=\bigcap_{i=1}^n O_\ell(C_L(D_i))=O_\ell(C_L(D))=\{1\}.
\end{align*}
Therefore, viewing $O_\ell(L)$ as an $\mathbb{F}_\ell$-vector space, by Lemma~\ref{lem:vec_spc} there exist $1\leq i_1,\dots i_t\leq n$ such that $\bigcap_{j=1}^t O_\ell(I_L(\theta_{i_j}))=\{1\}$. In addition, setting
\begin{align*}
\vartheta_m:=\left(\bigotimes_{\substack{j=1\\j\neq m}}^t \theta_{i_j}\right)\otimes\left(\bigotimes_{\substack{j=1\\j\notin\{i_1,\dots, i_t\}}}^n 1_{D_j}\right)\otimes 1_{D_{i_m}}\in\Irr(D),
\end{align*}
for each $1\leq m\leq t$, Lemma~\ref{lem:vec_spc} also gives that
\begin{align*}
\cC_m:=O_\ell((I_L(\vartheta_m))=\bigcap_{\substack{j=1\\j\neq m}}^t O_\ell(I_L(\theta_{i_j}))\cong C_\ell,
\end{align*}
and that the $\cC_m$'s generate $O_\ell(L)$.
\newline
\newline
For any subgroup $H$ of $L$ we denote by $\tilde{H}$ its preimage in $E$. Until further notice we fix some $1\leq m\leq t$ and set $I_m:=I_L(\vartheta_m\otimes\varphi)$. Since $\psi^*(\theta_i)=\theta_i$, for all $1\leq i\leq n$, $\psi^*(\vartheta_m)=\vartheta_m$. Therefore, $\psi_L(I_m)=I_m$ and $\psi_L(\cC_m)=\cC_m$ implying ${}^\alpha(D\rtimes\tilde{I}_m)=D\rtimes\tilde{I}_m$ and ${}^\alpha(D\rtimes\tilde{\cC}_m)=D\rtimes\tilde{\cC}_m$.
\newline
\newline
By~\cite[Theorem 6.11(b)]{is76}, $\Irr(B|\vartheta_m\otimes\varphi)=\Irr(G|\vartheta_m\otimes\varphi)$ is in one-to-one correspondence, via induction, with $\Irr(I_G(\vartheta_m\otimes\varphi)|\vartheta_m\otimes\varphi)=\Irr(D\rtimes\tilde{I}_m|\vartheta_m\otimes\varphi)$. Therefore, since ${}^\alpha(D\rtimes\tilde{I}_m)=D\rtimes\tilde{I}_m$ and $\alpha^*$ respects induction, $\alpha^*$ must be the identity on $\Irr(D\rtimes\tilde{I}_m|\vartheta_m\otimes\varphi)$. By Lemma~\ref{lem:exten}, $\vartheta_m\otimes\varphi$ extends to $D\rtimes \tilde{\cC}_m$ and every extension to $D\rtimes \tilde{\cC}_m$ is stable in $D\rtimes \tilde{I}_m$. In other words, every character in $\Irr(D\rtimes\tilde{I}_m|\vartheta_m\otimes\varphi)$ lies above a unique character in $\Irr(D\rtimes \tilde{\cC}_m|\vartheta_m\otimes\varphi)$. Therefore, again since $\alpha^*$ respects induction, $\alpha^*$ is also the identity on $\Irr(D\rtimes \tilde{\cC}_m|\vartheta_m\otimes\varphi)$. Now every character in $\Irr(D\rtimes \tilde{\cC}_m|\vartheta_m\otimes\varphi)$ is linear and is therefore determined by its restriction to $\tilde{\cC}_m$. Hence, $\alpha$ is the identity on $Z(\cO\tilde{\cC}_m e_\varphi)=\cO\tilde{\cC}_m e_\varphi$.
\newline
\newline
Since the $\cC_m$'s generate $O_\ell(L)$ and the $O_\ell(L)$'s generate $L$ as $\ell$ runs over all primes dividing $|L|$, $\alpha$ is the identity on $\cO E e_\varphi$. In particular, $\psi_L$ is the identity. As noted earlier in this proof, for every $\chi\in\Irr(D)$, $\psi^*(\chi)$ is conjugate to $\chi$ via an element of $E$. Therefore, by part (5) of Lemma~\ref{lem:HonP}, $\psi^*$ is induced by an element of $E$ and so, by composing $\psi$ and $\alpha$ with an appropriately chosen $c_g$, we may assume that $\psi$ is the identity.
\newline
\newline
We now repeat the entire proof with $\psi$ being the identity until we've reproved that $\alpha$ is the identity on $\cO E e_\varphi$. Therefore, $\alpha$ is the identity and $M$ is the trivial Morita auto-equivalence.
\end{proof}

\begin{thm}\label{thm:cyc_prin}
Let $b$ be a block with normal abelian defect group and either cyclic inertial quotient or a principal block with abelian inertial quotient. Then $\Picent(b)$ is trivial.
\end{thm}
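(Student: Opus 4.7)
The plan is to handle the two cases by complementary arguments: for the cyclic case, I would adapt the proof of Theorem~\ref{thm:elem_ab} using $\vartheta = 1_D$ in place of the nontrivial $\vartheta_m$'s; for the principal block case, I would use Morita equivalence to a commutative algebra.

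For the cyclic inertial quotient case, I would reduce via Külshammer's theorem to the setup of Proposition~\ref{prop:triv_source} with $L = E/Z$ cyclic, and write $M \cong {}_\alpha B$ for $M \in \Picent(B) \subseteq \cT(B)$ (using~\cite[Propositions 4.3, 4.4]{eali19}). The key observation is that $E$ is abelian: a central extension of a cyclic group by its central subgroup is always abelian, so $\cO E e_\varphi \cong \bigoplus_{\mu \in \Irr(E | \varphi)} \cO e_\mu$ is a finite product of copies of $\cO$. Now take $\vartheta = 1_D$: the characters $\Irr(B | 1_D \otimes \varphi) \subseteq \Irr(B)$ correspond via inflation to $\Irr(E | \varphi)$, and $\alpha^*$ being identity on $\Irr(B)$ forces it to be identity on this subset. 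This in turn forces $\alpha$ to fix every primitive idempotent $e_\mu$ of $\cO E e_\varphi$, so by the commutative structure $\alpha$ is identity on $\cO E e_\varphi$, giving $\psi_L = \Id$ and $\chi = 1$. The remaining steps mirror Theorem~\ref{thm:elem_ab}: the character-induction argument forces $\psi^*(\theta) \in L \cdot \theta$ for every $\theta \in \Irr(D)$, part (5) of Lemma~\ref{lem:HonP} places $\psi \in L$, and after absorbing $\psi$ into an inner automorphism one obtains $\alpha = \Id$, so $M$ is trivial.

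For the principal block case with abelian inertial quotient, I would appeal to the Morita equivalence $b \sim \cO L$, which holds for principal blocks with normal abelian defect group (essentially Broué--Puig; concretely, the principal block of $\cO(D \rtimes L)$ has idempotent $e_0 = \frac{1}{|D|}\sum_{d \in D} d$ and is isomorphic to $\cO L$ via $xle_0 \mapsto e_0 l$, using $xe_0 = e_0$ for $x \in D$ and $le_0 = e_0 l$ for $l \in L$). Since $L$ is abelian and $\cO$ is large enough, $\cO L \cong \bigoplus_{\mu \in \Irr(L)} \cO e_\mu$ is commutative, a finite product of copies of $\cO$. Its Picard group consists of permutations of the primitive idempotents, so any element of $\Picent(\cO L)$ induces the trivial permutation on $\Irr(L)$, hence fixes every $e_\mu$ and corresponds to the identity $\cO$-algebra automorphism. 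By Morita-invariance of $\Picent$, we conclude $\Picent(b) = \Picent(\cO L) = 1$.

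The main technical obstacle for the cyclic case is verifying that the $\alpha^*$-action on inflated characters really corresponds to the restricted action of $\alpha$ on $\cO E e_\varphi$ (an instance of $\alpha^*$-equivariance of inflation), after which the commutative structure of $\cO E e_\varphi$ finishes the first step cleanly. For the principal block case, the obstacle is formulating the Morita equivalence $b \sim \cO L$ correctly, which is classical but should be cited explicitly or argued via the Külshammer reduction to the principal block of $\cO(D \rtimes L)$.
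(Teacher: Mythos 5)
Your argument for the cyclic inertial quotient case is essentially the same as the paper's: reduce via K\"ulshammer to the setup of Proposition~\ref{prop:triv_source}, observe that $E$ is abelian (as $E/Z(E)$ is a quotient of the cyclic $E/Z$), take $\vartheta=1_D$, use that every character in $\Irr(E|\varphi)$ is linear to force $\alpha$ to be the identity on $\cO Ee_\varphi$, and then run the rest of the proof of Theorem~\ref{thm:elem_ab}. That part is fine.

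Your treatment of the principal block case, however, contains a fundamental error. The element $e_0=\frac{1}{|D|}\sum_{d\in D}d$ does not lie in $\cO G$, since $|D|$ is a power of $p$ and $p$ is a non-unit in $\cO$; it lies only in $KG$. Consequently $\cO Ge_0$ is not defined, and the claimed isomorphism $\cO Ge_0\cong\cO L$ does not make sense integrally. More conceptually, the principal block of $\cO(D\rtimes L)$ (with $L$ a nontrivial $p'$-group acting faithfully on a nontrivial abelian $D$) has defect group $D$, whereas $\cO L$ is a product of matrix algebras over $\cO$ with trivial defect, so the two cannot be Morita equivalent; and Brou\'e--Puig concerns nilpotent blocks, which would give $\cO D$ rather than $\cO L$ and in any case does not apply here since the block is not nilpotent. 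The correct reduction, used in the paper, is more modest: for a principal block $b$ of $H$ with normal abelian defect group $P$, the group $P$ is a Sylow $p$-subgroup, so $H=P\rtimes F$ by Schur--Zassenhaus; since $C_F(P)$ lies in the kernel of $b$, one may factor it out and thereby assume $b$ is of the form $B$ with $Z$ trivial and $E=L$ abelian. At that point the abelian inertial quotient case can be handled by exactly the same argument you already gave for the cyclic case, since all you used there was that $E$ is abelian. So the fix is simply to drop the appeal to a Morita equivalence with $\cO L$ and instead rerun your $\vartheta=1_D$ argument after this reduction.
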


\begin{proof}
The proofs for both situations proceed similarly to that of Theorem~\ref{thm:elem_ab}. We first note that if $b$ is a principal block of a group $H$ with normal defect group $P$, then $P$ must be a Sylow $p$-subgroup of $H$. Therefore, by the Schur-Zassenhaus theorem, $H=P\rtimes F$, for some $p'$-subgroup $F\leq H$. Then, $C_F(P)$ must be in the kernel of $b$ and so, by factoring out by $C_F(P)$ and considering the relevant Morita equivalent block, we may assume that $b$ is already of the form of $B$ with $Z$ trivial. In other words, the block $B$ we reduce to is also principal. In particular, in both instances of the theorem, $E$ is abelian.
\newline
\newline
Now replace $\vartheta_m\in\Irr(D)$, in the proof of Theorem~\ref{thm:elem_ab}, with $1_D$. Proceeding as in that proof and noting that, since $E$ is abelian, every character in $\Irr(E|\varphi)$ is linear, we prove that $\alpha$ is the identity on $\cO Ee_\varphi$. We then continue to prove that $M$ is the trivial Morita auto-equivalence exactly as before.
\end{proof}

\section{Examples of non-trivial $\Picent$}

Picard groups have not been calculated for any Morita equivalence class of blocks with non-abelian defect group, excluding nilpotent blocks but, as our first example shows, just taking the principal $p$-block of a suitably chosen $p$-group already yields an example of a block with non-trivial $\Picent$.
\newline
\newline
We first set up some notation. A class-preserving automorphism of a group $H$ is an automorphism that leaves invariant every conjugacy class of $H$. Equivalently it leaves invariant every irreducible character of $H$. We define
\begin{align*}
\Aut_c(H)&=\{\alpha\in \textnormal{Aut}(H)| \alpha\textnormal{ is class-preserving}\},\\
\Inn(H)&=\{ c_h\in \textnormal{Aut}(H)| h\in G\},
\end{align*}
$\Out(H)=\Aut(H)/\Inn(H)$ and $\Out_c(H)=\Aut_c(H)/\Inn(H)$.

\begin{exam}\label{exam:OP}
Let $P$ be a finite $p$-group. We know, by~\cite{roggenkamp1987isomorphisms}, that $\Pic(\cO P)\cong \Hom(P,\cO^\times)\rtimes\Out(P)$. Since any element of $\Picent(\cO P)$ must fix the trivial character, $\Picent(\cO P)\cong \Out_c(P)$ and finding a non-inner, class-preserving automorphism of $P$ immediately yields a non-trivial element of $\Picent(\cO P)$. Many $p$-groups with such an automorphism have been constructed,~\cite{ya2013} lists some of these. The smallest $p$-group $P$ with $\Out_c(P)\neq \{1\}$ has order $32$ and is described in \cite{wall1947finite}.
\end{exam}

For blocks with abelian defect group the situation is different. As $\Out_c(P)$ is trivial, for any abelian $p$-group $P$, $p$-groups will no longer provide examples with non-trivial $\Picent$. In addition, in all the cases the Picard group of a block with abelian defect group has been computed (namely \cite{bkl18} and \cite{eali19}) $\Picent$ has been trivial. However, the families of blocks that we exhibit show explicitly that $\Picent$ of a block with abelian defect group is not trivial, in general. The first family we are going to describe yields a counterexample in the same spirit as Example~\ref{exam:OP}, the non-trivial element of $\Picent(B)$ is given by an outer automorphism of the relevant group.
\newline
\newline
Before stating the result we set up some notation. Let $t>1$ be an integer coprime to $p$. For any $i\in\mathbb{N}$, we denote by $\omega_i$ a primitive $i^{\nth}$ root of unity and take $n$ to be the smallest positive integer such that $\omega_{t^2}\in \mathbb{F}_{p^n}$. In other words, $n$ is the multiplicative order of $p\mod t^2$. Set $s:=(p^{nt}-1)/(t(p^n-1))$ and $D:=(C_p)^{nt}$ that we identify with $\mathbb{F}_{p^{nt}}$. Certainly $s$ is coprime to $p$ and
\[
(p^{nt}-1)/(p^n-1)=p^{n(t-1)}+...+p^n+1.
\]
Therefore, since each $p^{ni}\equiv 1\mod t^2$, $s\equiv 1 \mod t$. In particular, $s$ and $t$ are coprime. For any $\lambda\in\mathbb{F}_{p^{nt}}^\times$, we denote by $m_\lambda$ the group automorphism of $\mathbb{F}_{p^{nt}}$ given by multiplication by $\lambda$ and we denote by $\Phi_{p^n}$ the automorphism of $\mathbb{F}_{p^{nt}}$ given by $(x\mapsto x^{p^n})$.
\newline
\newline
We set $G:=D\rtimes E$, where $E:=(\langle g\rangle \rtimes \langle h\rangle)$, $\langle g\rangle=\langle m_{\omega_s}\rangle\cong C_s$ and $\langle h\rangle=\langle\Phi_{p^n}\circ m_{\omega_{t^2}}\rangle\cong C_{t^2}$. Since ${}^hg=g^{p^n}$, $E$ is well-defined. What's more, since $s$ and $t^2$ are coprime, $E/C_E(D)$ has order $st^2$. In other words, $E$ acts faithfully on $D$ and $\cO G$ is a block.

\begin{prop}\label{prop:st_example}
With the above notation, $\Out_c(G)$ is non-trivial. Moreover, the image of $\Out_c(G)$ in $\Picent(\cO G)$ is non-trivial.
\end{prop}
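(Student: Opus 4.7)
The plan is to exhibit an explicit non-inner class-preserving automorphism $\psi$ of $G$ and then apply Proposition~\ref{prop:triv_source} to show that the induced Morita self-equivalence $[{}_\psi\cO G]$ represents a non-trivial element of $\Picent(\cO G)$. My candidate is the map $\psi\colon G\to G$ defined by $\psi(d) = \omega_{t^2}\cdot d$ for $d\in D$ and $\psi(e) = e$ for $e\in E$. Because $t^2 \mid p^n - 1$, we have $\omega_{t^2}\in\mathbb{F}_{p^n}^{\times}$, so $m_{\omega_{t^2}}$ commutes with $\Phi_{p^n}$, and hence with all of $E$ inside $\Aut(D)$; this makes $\psi$ a well-defined group automorphism. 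It is not inner: if $\psi = c_{g_0}$ for $g_0 = d_0 e_0 \in G$, then $\psi|_D$ would have to coincide with the action of $e_0 \in E$ on $D$, forcing $m_{\omega_{t^2}} \in E$, i.e.\ $\omega_{t^2}\in\langle\omega_{st}\rangle$. This fails because $\omega_{t^2}$ has order $t^2$ while $\langle\omega_{st}\rangle$ has order $st$, and $t^2 \nmid st$ when $\gcd(s,t)=1$ and $t>1$.

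To verify $\psi\in\Aut_c(G)$, note that the $G$-conjugacy class of $d\in D$ is its $E$-orbit. For $d\neq 0$ we seek $(i,j)\in\mathbb{Z}/s\times\mathbb{Z}/t^2$ with $\omega_{t^2}d = \omega_s^i\omega_{t^2}^j d^{p^{nj}}$. Writing $j = 1 + tm$, so that $\gcd(j,t) = 1$, and using $p^{nj}\equiv p^n\pmod{p^{nt}-1}$ together with $\omega_{t^2}^{1-j} = \omega_t^{-m}$, this reduces to $d^{p^n-1} = \omega_s^{-i}\omega_t^{-m}$. The image of $x\mapsto x^{p^n-1}$ on $\mathbb{F}_{p^{nt}}^{\times}$ is $\langle\omega_{st}\rangle = \langle\omega_s\rangle\times\langle\omega_t\rangle$ (internal direct product since $\gcd(s,t)=1$), so for each $d\neq 0$ there are unique $i$ and $m$ solving the equation. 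For a mixed element $de\in G$ with $e\in E\setminus\{1\}$, a short case analysis shows that $e$ acts on $D$ without non-zero fixed points, so $1-e$ is invertible on $D$, and $d_0 := (1-e)^{-1}\bigl((\omega_{t^2}-1)d\bigr)\in D$ conjugates $de$ to $(\omega_{t^2}d)\cdot e = \psi(de)$. This proves $\Out_c(G)\neq 1$.

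Finally, $M:={}_\psi\cO G$ has trivial source, hence lies in $\cT(\cO G)$, and belongs to $\Picent(\cO G)$ by the previous paragraph. Its vertex is $\Delta m_{\omega_{t^2}}\leq D\times D$, which differs from $\Delta D$ because $\omega_{t^2}\neq 1$. By Proposition~\ref{prop:triv_source}(3), every element of $\cT(\cO G)$ with vertex $\Delta D$ is obtained by tensoring with a linear character of $G$ trivial on $D$, and in particular $\cO G$ itself has vertex $\Delta D$. Therefore $M\not\cong\cO G$ as bimodules, so $M$ is a non-trivial class in $\Picent(\cO G)$, and the image of $\Out_c(G)$ in $\Picent(\cO G)$ is non-trivial. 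The main obstacle is the class-preservation analysis on $D$, which relies on the number-theoretic identity $\omega_{t^2}^{1-j} = \omega_t^{-(j-1)/t}$ for $j\equiv 1\pmod{t}$ and the surjectivity of the $(p^n-1)$-th power map onto $\langle\omega_{st}\rangle$.
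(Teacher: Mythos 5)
Your proposal uses the same automorphism as the paper (multiplication by $\omega_{t^2}$ on $D$, identity on $E$), the same modular arithmetic to match $E$-orbits on $D$ (reducing to $d^{p^n-1}\in\langle\omega_{st}\rangle$), and the same non-inner observation ($m_{\omega_{t^2}}$ not in the image of $E$), so the overall strategy coincides. Where you genuinely differ is in verifying that the automorphism is class-preserving: the paper shows $E$ acts freely on $\Irr(D)\setminus\{1_D\}$ and then uses Clifford theory to describe $\Irr(G)$ and check stability character by character, whereas you check conjugacy classes of $G$ directly, handling classes of elements $de$ with $e\neq 1$ by solving $(1-e)(d_0)=(\omega_{t^2}-1)d$ for $d_0\in D$, which is invertible precisely because $e$ has no non-zero fixed points. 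This is a nice, more elementary alternative to the paper's Clifford-theoretic route; both hinge on the same fixed-point-freeness.

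Two small imprecisions are worth flagging. First, ``forcing $m_{\omega_{t^2}}\in E$, i.e.\ $\omega_{t^2}\in\langle\omega_{st}\rangle$'' skips the observation that the only pure multiplications in (the image in $\Aut(D)$ of) $E$ lie in $\langle m_{\omega_{st}}\rangle=\langle g\rangle\times\langle h^t\rangle$, since the Frobenius part of an element of $E$ must be trivial for it to be $\mathbb{F}_{p^{nt}}$-linear. Second, and more importantly, ``which differs from $\Delta D$ because $\omega_{t^2}\neq 1$'' only shows the two subgroups are distinct; since vertices are only defined up to $G\times G$-conjugacy, you need $\Delta m_{\omega_{t^2}}$ and $\Delta D$ to be non-conjugate in $G\times G$. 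This does follow from $m_{\omega_{t^2}}\notin E$ together with the computation~(\ref{algn:vert}) and $D$ being abelian, but you should make the connection explicit; the appeal to Proposition~\ref{prop:triv_source}(3) as stated does not close this gap, since it concerns which bimodules \emph{have} vertex $\Delta D$, not whether your vertex is conjugate to $\Delta D$.
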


\begin{proof}
We define $\psi=m_{\omega_{t^2}}\in\Aut(D)$. This extends to an automorphism $\psi_G\in\Aut(G)$ that acts trivially on $E$. We claim that $\psi_G$ gives a non-trivial element of $\Out_c(G)$.
\newline
\newline
Note that $\psi\in\Aut(D)$ is not already in $E$. Suppose it were, then $\psi^{-1}\circ h=\Phi_{p^n}\in E$ is an element of order $t$. By considering their images in $E/\langle g\rangle\cong C_{t^2}$, we see that all the elements of order $t$ in $E$ are in $\langle g\rangle\times\langle h^t\rangle$ and hence powers of $h^t=m_{\omega_{t^2}}^t$. However, $\Phi_{p^n}$ is not of this form as it has non-trivial fixed points on $D$.
\newline
\newline
We now show that $\psi$ preserves the $E$-conjugacy classes of $D$, that is for each $x\in D$, there exists $y\in E$ such that $\psi(x)={}^y x$.
\newline
\newline
For $x\in \mathbb{F}_{p^{nt}}^\times$, $0\leq i<s$ and $0\leq j<t$,
\[
c_g^i\circ c_h^{1+jt}(x)=\psi(x)\Leftrightarrow \omega_s^{i}\omega_{t^2}^{1+jt}x^{p^{n(1+jt)}}=\omega_{t^2}x\Leftrightarrow x^{p^n-1}=\omega_{t^2}^{-jt}w_s^{-i}.
\]
Thus, since $x^{p^n-1}$ is a (not necessarily primitive) $(st)^{\nth}$ root of unity, there exist unique $i,j$ such that $c_g^i\circ c_h^{1+jt}(x)=\psi(x)$.
\newline
\newline
Next we prove that for all $x\in D\backslash\{1\}$, $C_E(x)=\{1\}$. This is crucial, since it will give us a description of the irreducible characters of the group $G$.
\newline
\newline
We first note that all the non-trivial elements of $\langle g\rangle\times \langle h^t\rangle=\langle m_{\omega_{st}}\rangle$ have no non-trivial fixed points on $D$. If $y\in E\backslash \langle g\rangle$, then, by considering its image in $E/\langle g\rangle\cong C_{t^2}$, there exists an integer $l$ such that $y^l\in(\langle g\rangle\times \langle h^t\rangle)\backslash\{1\}$. If $y$ has non-trivial fixed points on $D$ then $y^l$ would have non-trivial fixed points as well, thus $y$ can't have any non-trivial fixed points. In other words, for all $x\in D\backslash\{1\}$, $C_E(x)=\{1\}$.
\newline
\newline
We now show that the analogous property holds, when we replace $D$ with $\Irr(D)$. Let $y\in E\backslash\{1\}$ and decompose $D=D_1\times\dots\times D_m$ with respect to the action of $\langle y \rangle$. Let $\theta_i\in\Irr(D_i)$, for each $1\leq i\leq m$, and set $\theta:=\theta_1\otimes\dots\otimes\theta_m\in\Irr(D)$. If $\theta\neq 1_D$, say $\theta_j\neq 1_{D_j}$, for some $1\leq j\leq m$, then
\begin{align*}
I_{\langle y\rangle}(\theta)\subseteq I_{\langle y\rangle}(\theta_j)=C_{\langle y\rangle}(D_j)=\{1\},
\end{align*}
where the first equality follows from parts (3) and (4) of Lemma~\ref{lem:HonP} and the final one from the previous paragraph. In other words, the action of $E$ on $\Irr(D)$ is fixed-point-free as well. Thus all irreducible character of $G$ are either irreducible characters with $D$ in their kernel or irreducible characters induced from non-trivial characters of $D$. Characters of $G$ with $D$ in their kernel correspond to characters of $G/D$, that are obviously fixed by $\psi_G$, while the characters of $G$ induced from $D$ are characterised by their values on $G$-conjugacy classes of elements of $D$ but $\psi$ preserves the $E$-conjugacy classes of elements of $D$. Therefore, every character of $G$ is $\psi_G$-stable and so $\psi_G\in \Aut_c(G)$.
\newline
\newline
Note that $\psi_G$ is a non-inner group automorphism of $G$, since we have shown that $\psi$ is not induced by an element of $E$. Finally, we note that, since ${}_{\psi_G}(\cO G)\cong\cO_{\Delta\psi_G}\uparrow_{\Delta \psi_G}^{G\times G}$, ${}_{\psi_G}(\cO G)$ has vertex $\Delta\psi$. Similarly $\cO G$ has vertex $\Delta D$. Since $\psi$ is not induced by an element of $E$, $\Delta\psi$ is not conjugate to $\Delta D$ in $G$. Therefore, ${}_{\psi_G}(\cO G)$ is a non-trivial element of $\Picent(\cO G)$.
\end{proof}

We now exhibit another family of blocks with non-trivial $\Picent$. The main difference with the previous family is that, as well as having abelian defect group, these blocks also have abelian inertial quotient and diagonal vertex. In fact, the non-trivial element of $\Picent(B)$ we exhibit is given by multiplication by a linear character of the inertial quotient. Note that, by Theorem~\ref{thm:cyc_prin}, these blocks are necessarily non-principal.
\newline
\newline
Let $\ell$ be a prime number different from $p$. Take
\[
E:=(\langle z\rangle\times \langle g\rangle)\rtimes\langle h\rangle\cong(C_{\ell}\times  C_{\ell^2})\rtimes C_{\ell},
\]
defined by $hz=zh$, ${}^hg=gz$ and set $F:=\langle z\rangle\times \langle g^{\ell}\rangle\times \langle h\rangle\leq E$. Furthermore, let $n$ be the multiplicative order of $p\mod \ell^2$ and set $D:=(C_p)^n\times(C_p)^n=:D_1\times D_2$ that we identify with $\mathbb{F}_{p^n}\oplus \mathbb{F}_{p^n}$.
\newline
\newline
We define $G:=D\rtimes E$, where the action of $E$ on $D_1$ has kernel $\langle z\rangle \times \langle h\rangle$, while on $D_2$ the kernel is $\langle z\rangle\times \langle g^{\ell}h\rangle$ and $g$ acts on both components by multiplication by $\omega\in \mathbb{F}_{p^n}^{\times}$, a primitive $\ell^2$-th root of of unity. Since $\omega\notin\mathbb{F}_{p^m}^{\times}$ for any $m<n$, no proper $\mathbb{F}_p$-subspace of $\mathbb{F}_{p^n}$ has an $\mathbb{F}_p$-linear map with eigenvalue $\omega$. In particular, the actions of $E$ on both $D_1$ and $D_2$ are indecomposable. Of course, $C_E(D)=Z:=\langle z\rangle$. Let $\varphi\in\Irr(Z)\backslash\{1_Z\}$ and set $B:=\mathcal{O}Ge_{\varphi}$.

\begin{prop}\label{prop:tensor_linear}
With the notation above, tensoring by any non-trivial $\lambda\in\Irr(G|1_{D\rtimes F})$ yields a non-trivial element of $\Picent(B)$.
\end{prop}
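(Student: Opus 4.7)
The strategy is to combine Proposition~\ref{prop:triv_source}(3) with an explicit Clifford-theoretic character vanishing argument. By Proposition~\ref{prop:triv_source}(3), tensoring by any $\lambda\in\Irr(G|1_C)$ produces an element of $\cT(B)\subseteq\Pic(B)$ with vertex $\Delta D$, and distinct $\lambda$'s give non-isomorphic bimodules, so non-triviality in $\Pic(B)$ is automatic once $\lambda\neq 1$. Since tensoring by $\lambda$ acts on $\Irr(B)$ by $\chi\mapsto\chi\lambda$, membership in $\Picent(B)$ amounts to showing $\chi\lambda=\chi$ for every $\chi\in\Irr(B)$. As $\lambda$ is the inflation of a non-trivial character of $G/(D\rtimes F)\cong C_\ell$, this in turn reduces to proving that every $\chi\in\Irr(B)=\Irr(G|\varphi)$ vanishes on $G\setminus(D\rtimes F)$.

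I would establish this vanishing by applying Clifford theory to the normal subgroup $C=D\times Z$ of $G$, parametrising $\Irr(G|\varphi)$ via the $E$-orbits on $\Irr(D)=\Irr(D_1)\times\Irr(D_2)$. The indecomposability of the $E$-actions on $D_1$ and $D_2$, combined with parts (3) and (4) of Lemma~\ref{lem:HonP}, forces $I_E(\theta_i)=C_E(D_i)$ for every non-trivial $\theta_i\in\Irr(D_i)$, hence $I_E(\theta_1)=\langle z,h\rangle$ and $I_E(\theta_2)=\langle z,g^\ell h\rangle$. Consequently, for every $\theta\in\Irr(D)\setminus\{1_D\}$ the stabiliser $I_E(\theta)$ is one of $\langle z,h\rangle$, $\langle z,g^\ell h\rangle$, or $\langle z\rangle$, and each of these is contained in $F$. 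Since $D\rtimes F$ is normal in $G$ and contains the Clifford-inducing subgroup $D\rtimes I_E(\theta)$, every character in $\Irr(G|\theta\otimes\varphi)$ vanishes off $D\rtimes F$.

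The main obstacle is the remaining case $\theta=1_D$, where $I_E(\theta)=E$ and the previous argument yields nothing. The characters in $\Irr(G|1_D\otimes\varphi)$ are precisely the inflations of $\Irr(E|\varphi)$, so it suffices to show every such character of $E$ vanishes off $F$. I would apply Clifford theory a second time, now to the normal abelian subgroup $A:=\langle z,g\rangle\trianglelefteq E$. The key observation is that $hgh^{-1}=gz$, combined with the non-triviality of $\varphi$ on $\langle z\rangle$, forces $\langle h\rangle$ to act freely on $\Irr(A|\varphi)$, so $I_E(\varphi\otimes\mu)=A$ for every $\mu\in\Irr(\langle g\rangle)$ and every character in $\Irr(E|\varphi)$ has the form $(\varphi\otimes\mu)\uparrow_A^E$. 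A direct induction-formula calculation using $h^k g h^{-k}=gz^k$ then yields
\begin{align*}
((\varphi\otimes\mu)\uparrow_A^E)(z^a g^b)=\varphi(z)^a\mu(g)^b\sum_{k=0}^{\ell-1}\varphi(z)^{kb},
\end{align*}
which vanishes unless $\ell\mid b$, i.e.\ unless $z^a g^b\in\langle z,g^\ell\rangle$. Combined with the vanishing off $A$ coming from induction, each character in $\Irr(E|\varphi)$ vanishes off $\langle z,g^\ell\rangle\subseteq F$, which finishes the proof. The hypothesis $\varphi\neq 1_Z$ is indispensable in this last step: without it, $\varphi\otimes\mu$ would already be $\langle h\rangle$-stable and we would have no way to force the support of $\Irr(E|\varphi)$ into a subgroup of $F$.
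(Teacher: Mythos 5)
Your proof is correct, and the overall architecture matches the paper's: non-triviality comes from Proposition~\ref{prop:triv_source}(3), and membership in $\Picent(B)$ is proved by Clifford theory along the normal subgroup $C=D\times Z$, using parts (3) and (4) of Lemma~\ref{lem:HonP} to pin down the stabilisers $I_E(\theta)$ for $\theta\neq 1_D$. Where you diverge is in two respects. First, you reformulate the target as a \emph{vanishing} statement: every $\chi\in\Irr(B)$ vanishes off $D\rtimes F$. This is a nice unifying lens, and it is genuinely forced on you in the $\theta=1_D$ case, because the relevant inducing subgroup $A=\langle z,g\rangle$ is \emph{not} contained in $F$, so the paper's one-line trick $\lambda\cdot(\psi\!\uparrow)=(\lambda\!\downarrow\cdot\,\psi)\!\uparrow=\psi\!\uparrow$ (valid when $\lambda$ is trivial on the inducing subgroup) does not apply directly to $A$. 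Second, for that case the paper cites~\cite[Lemma 3.1(2)]{li2019} to write $\chi^{\oplus m}=\eta\!\uparrow_{Z(E)}^E$ with $Z(E)=Z\times\langle g^\ell\rangle\leq F$ and then applies the trick at the level of $Z(E)$; you instead run Clifford theory over $A\lhd E$ yourself, show $I_E(\varphi\otimes\mu)=A$ using $\varphi\neq 1_Z$ and $hgh^{-1}=gz$, and then evaluate the induction formula explicitly to find that $(\varphi\otimes\mu)\!\uparrow_A^E$ is supported on $\langle z,g^\ell\rangle=Z(E)\subseteq F$. Your version is more self-contained (avoiding the external citation) at the cost of a short explicit character computation, and as a bonus it makes visibly explicit where the hypothesis $\varphi\neq 1_Z$ enters, which the paper leaves implicit inside the cited lemma.
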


\begin{proof}
Let $\lambda\in\Irr(G|1_{D\rtimes F})$. Every character in $\Irr(B| 1_D)$ is of the form $\Inf_E^G(\chi)$, for some $\chi\in\Irr(E|\varphi)$. Then, by~\cite[Lemma 3.1(2)]{li2019}, we have $\chi^{\oplus m}=\eta\uparrow_{Z(E)}^E$, for some $m\in\mathbb{N}$ and $\eta\in \Irr(Z(E)| \varphi)$. Note that $Z(E)=Z\times \langle g^\ell\rangle\leq F$. Therefore,
\begin{align*}
(\lambda\downarrow^G_E).(\eta\uparrow_{Z(E)}^E)=(\lambda\downarrow^G_{Z(E)}.\eta)\uparrow_{Z(E)}^E=\eta\uparrow_{Z(E)}^E,
\end{align*}
giving that $(\lambda\downarrow^G_E).\chi=\chi$ and finally that $\lambda.\Inf_E^G(\chi)=\Inf_E^G(\chi)$.
\newline
\newline
Now take $\chi\in\Irr(B|\theta)$, for some $\theta=\theta_1\otimes\theta_2\in \Irr(D)$, where $\theta_i\in \Irr(D_i)\backslash\{1_{D_i}\}$, for $i=1,2$. By part (3) and (4) of Lemma~\ref{lem:HonP}, $I_E(\theta_i)=C_E(D_i)$, for $i=1,2$. Therefore, $I_E(\theta_1\otimes\theta_2)=C_E(D_1)\cap C_E(D_2)=Z$ and so $\chi=(\theta\otimes\varphi)\uparrow_{D\times Z}^G$. Hence,
\begin{align*}
\lambda.\chi=\lambda.((\theta\otimes\varphi)\uparrow_{D\times Z}^G)=((\lambda\downarrow^G_{D\times Z}).(\theta\otimes\varphi))\uparrow_{D\times Z}^G=(\theta\otimes\varphi)\uparrow_{D\times Z}^G=\chi.
\end{align*}
Finally, let $\chi\in\Irr(B|\theta)$, where $\theta=\theta_1\otimes 1_{D_2}\in \Irr(D)$, for some $\theta_1\in \textnormal{Irr}(D_1)\backslash\{1_{D_1}\}$. This time $I_E(\theta)=\langle z\rangle\times\langle h\rangle\leq F$ and the argument concludes similarly to the previous paragraph. Of course, the case of $\theta=1_{D_1}\otimes \theta_2$, for some $\theta_2\in \textnormal{Irr}(D_2)\backslash\{1_{D_2}\}$ is dealt with in an identical fashion.
\newline
\newline
We have proved that tensoring with $\lambda$ fixes $\Irr(B)$ pointwise and thus defines an element of $\Picent(B)$. The fact that non-trivial $\lambda$ induce non-trivial elements of $\Picent(B)$ follows from part (3) of Proposition~\ref{prop:triv_source}.
\end{proof}

\end{document}